\documentclass{article}
\usepackage{amsmath}
\usepackage{amsfonts}
\usepackage{amsthm}
\usepackage{amssymb}
\usepackage{color}

\newtheorem{theorem}{Theorem}[section]

\newtheorem{observation}[theorem]{Observation}

\newtheorem{corollary}[theorem]{Corollary}

\theoremstyle{definition}
\newtheorem{definition}[theorem]{Definition}
\theoremstyle{remark}
\newtheorem{remark}[theorem]{Remark}

\oddsidemargin 0.4truecm   
\evensidemargin 0pt \marginparwidth 40pt \marginparsep 10pt

\topmargin -1.7truecm \headsep 40pt \textheight 21.5truecm
\textwidth 15truecm


\def\f2{\mathbb{F}_2}

\newcommand{\RNP}{{\rm RNP}}

\newcommand{\ep}{\varepsilon}

\begin{document}

\title{Radon-Nikod\'ym property and thick families of geodesics}

\author{Mikhail Ostrovskii}

\date{\today}
\maketitle

\begin{large}

\noindent{\bf Abstract.} Banach spaces without the Radon-Nikod\'ym
property are characterized as spaces containing bilipschitz images
of thick families of geodesics defined as follows. A family $T$ of
geodesics joining points $u$ and $v$ in a metric space is called
{\it thick} if there is $\alpha>0$ such that for every $g\in T$
and for any finite collection of points $r_1,\dots,r_n$ in the
image of $g$, there is another $uv$-geodesic $\widetilde g\in T$
satisfying the conditions: $\widetilde g$ also passes through
$r_1,\dots,r_n$, and, possibly, has some more common points with
$g$. On the other hand, there is a finite collection of common
points of $g$ and $\widetilde g$ which contains $r_1,\dots,r_n$
and is such that the sum of maximal deviations of the geodesics
between these common points is at least $\alpha$.

\section{Introduction}

The Radon-Nikod\'ym property (\RNP) is one of the most important
isomorphic invariants of Banach spaces. We refer to \cite{BL00,
Bou79, Bou83, DU77, Pis11} for systematic presentations of results
on the \RNP, and \cite{DM13}, \cite{DM07} for recent work on the
\RNP.\medskip

In the recent work on metric embeddings a substantial role is
played by existence and non-existence of bilipschitz embeddings of
metric spaces into Banach spaces with the \RNP, see
\cite{CK06,CK09,LN06}. At the seminar ``Nonlinear geometry of
Banach spaces'' (Texas A \&\ M University, August 2009) Bill
Johnson suggested the problem of metric characterization of
reflexivity and the Radon-Nikod\'{y}m property \cite[Problem
1.1]{Tex09}. Some work on this problem was done in \cite{Ost11}
and \cite{Ost13+}. The purpose of this paper is to continue this
work. More precisely, we are going to characterize the \RNP\ using
thick families of geodesics defined in the following way.

\begin{definition}\label{D:ThickFam} Let $u$ and $v$ be two elements in a metric space $(M,d_M)$. A {\it $uv$-geodesic} is a
distance-preserving map $g:[0,d_M(u,v)]\to M$ such that $g(0)=u$
and $g(d_M(u,v))=v$ (where $[0,d_M(u,v)]$ is an interval of the
real line with the distance inherited from $\mathbb{R}$). A family
$T$ of $uv$-geodesics is called {\it thick} if there is $\alpha>0$
such that for every $g\in T$ and for any finite collection of
points $r_1,\dots,r_n$ in the image of $g$, there is another
$uv$-geodesic $\widetilde g\in T$ satisfying the conditions:

\begin{itemize}

\item The image of $\widetilde g$ also contains $r_1,\dots,r_n$.

Therefore there are $t_1,\dots,t_n\in [0,d_M(u,v)]$ such that
$r_i=g(t_i)=\widetilde g(t_i)$.

\item There are two sequences $\{q_i\}_{i=1}^m$ and
$\{s_i\}_{i=1}^{m+1}$ in $[0,d_M(u,v)]$ which are listed in
non-decreasing order and satisfy the conditions:

\begin{enumerate}

\item $\{q_i\}_{i=1}^m$ contains $\{t_i\}_{i=1}^n$

\item Points $s_1,\dots, s_{m+1}$ satisfy
\[0\le s_1\le q_1\le s_2\le q_2\le\dots\le s_m\le q_m\le
s_{m+1}\le d_M(u,v).\]

\item $g(q_i)=\widetilde g(q_i)$ for all $i=1,\dots, m$,  and

\[\sum_{i=1}^{m+1}d_M(g(s_i),\widetilde g(s_i))\ge\alpha.\]

\end{enumerate}

\end{itemize}

\end{definition}

The main purpose of this paper is to prove the following result.

\begin{theorem}\label{T:CustThick} For each non-\RNP\ Banach space $X$ there exists a metric space $M_X$ containing a thick family $T_X$
of geodesics which admits a bilipschitz embedding into $X$.
\end{theorem}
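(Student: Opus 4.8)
The plan is to start from a martingale characterization of the failure of the Radon-Nikod\'ym property. Recall the classical fact (see \cite{BL00,DU77,Pis11}) that a Banach space $X$ fails the \RNP\ if and only if there exist $\delta>0$ and a bounded $X$-valued martingale $(f_n)_{n\ge0}$ on $[0,1]$ with Lebesgue measure, adapted to an increasing sequence of finite partitions $\mathcal P_0\subset\mathcal P_1\subset\cdots$ of $[0,1]$ into intervals, such that $\Ex\|f_{n+1}-f_n\|\ge\delta$ for every $n$. The boundedness $\sup_n\|f_n\|_\infty=:R<\infty$ and the uniform lower bound $\delta$ on the increments are the two features I would exploit: $R$ will keep the forthcoming embedding bilipschitz, while $\delta$ will supply the thickness constant $\alpha$. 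Because each atom $A\in\mathcal P_n$ splits into atoms $A_1,\dots,A_k\in\mathcal P_{n+1}$ with $f_n|_A=\sum_j\frac{|A_j|}{|A|}\,f_{n+1}|_{A_j}$, the martingale is exactly a bounded, non-degenerate ``bush'' whose branching I would convert into branching geodesics.

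Next I would build $M_X$ as a length space of diamond/Laakso type whose combinatorics mirror the refinement tree of the partitions. Fix points $u,v$ and let a ``horizontal coordinate'' run from $u$ to $v$; a branch of the tree, i.e.\ a nested sequence of atoms $A^{(0)}\supset A^{(1)}\supset\cdots$, determines a $uv$-geodesic $g_\omega$. The metric $d_M$ is the intrinsic (geodesic) metric in which the vertex attached to an atom $A\in\mathcal P_n$ is placed, under the embedding into $X$, at a point encoding the value $f_n|_A$, with the vertical increment from a parent atom $A$ to a child atom $A_j$ proportional to $\|f_{n+1}|_{A_j}-f_n|_A\|$. One must verify that each $g_\omega$ is distance-preserving on $[0,d_M(u,v)]$ and that $T_X=\{g_\omega\}$ consists of genuine $uv$-geodesics; this is where the convex-combination (martingale) identities enter, since they force the would-be geodesics through the common branch points to have compatible lengths, so that the ``straight'' route and the ``detour'' route between two consecutive common vertices really have equal length.

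The heart of the argument is verifying thickness with a single $\alpha$ that does not degrade as the prescribed points $r_1,\dots,r_n$ become numerous or deep. Given $g=g_\omega$ and $r_1,\dots,r_n$ with $r_i=g(t_i)$, I would locate a level $N$ so large that all the $r_i$ are accounted for by vertices of generation $\le N$, take the level-$N$ vertices through which $g$ passes as the common points $q_1\le\cdots\le q_m$ (which then contain $t_1,\dots,t_n$), and define $\widetilde g=g_{\omega'}$ by swapping, inside \emph{every} atom of $\mathcal P_N$, the sub-branch that $g$ follows at generation $N+1$ for a different one. Then $g$ and $\widetilde g$ agree at all the $q_i$ but deviate between consecutive ones, and---this is the crucial point---the sum of these deviations reproduces, up to the fixed bilipschitz distortion, the quantity $\Ex\|f_{N+1}-f_N\|\ge\delta$, because summing the per-atom deviations over all atoms of $\mathcal P_N$ is precisely an average of martingale increments. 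Choosing the interleaving points $s_i$ to record the maximal deviation on each subinterval $[q_i,q_{i+1}]$ then yields $\sum_i d_M(g(s_i),\widetilde g(s_i))\ge\alpha$ with $\alpha\asymp\delta$, \emph{independently} of $N$.

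The main obstacle I anticipate is exactly the simultaneous fulfilment of two competing requirements: the embedding of $M_X$ into $X$ must be bilipschitz, which forces $d_M$ to be comparable to the ambient norm and thus forbids artificially enlarging deviations, while the thickness bound must remain a fixed $\alpha>0$ no matter how finely $g$ is pinned down. The martingale mechanism resolves this only because the total deviation at a single level---obtained by summing over the $\approx|\mathcal P_N|$ atoms, not from one diamond---equals a genuine martingale increment and is therefore bounded below by $\delta$ at \emph{every} level. Carefully bookkeeping the vertical and horizontal contributions to $d_M$ so that these sums survive the passage to the intrinsic metric, and so that the non-degeneracy is not lost to cancellation, is the delicate part of the construction.
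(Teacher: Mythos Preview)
Your overall strategy---start from a bush/martingale witnessing the failure of the \RNP\ and convert its branching structure into a family of geodesics whose deviation at every level is bounded below by the increment $\delta$---is exactly the paper's. The martingale formulation you use and the bush formulation of Definition~\ref{D:Bush} are interchangeable here, and your identification of the ``main obstacle'' (bilipschitz control versus a level-independent thickness constant) is accurate.

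The paper's execution differs from yours in one important simplification. Rather than build an abstract length space $M_X$ and then separately verify a bilipschitz embedding into $X$, the paper constructs the geodesics \emph{directly inside} $X$. This is made possible by a preliminary renorming: one chooses a norm-attaining functional $x^*$, shifts a bounded $\ep$-bush into the hyperplane $\{x^*=1\}$, and passes to the equivalent norm whose unit ball is the closed convex hull of $B_X\cup\{\pm x_{n,j}\}$. In this norm every $x_{n,j}$ has norm exactly $1$, so every broken line of the form $0\to\mu_1 x_{i_1,j_1}\to\mu_1 x_{i_1,j_1}+\mu_2 x_{i_2,j_2}\to\cdots$ is a genuine geodesic in $X$; $M_X$ is then just the union of these geodesics with the induced metric, and the ``bilipschitz embedding'' is the identity (the passage back to the original norm only costs a fixed constant). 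This trick eliminates precisely the bookkeeping you flag as delicate---there is no separate intrinsic metric to reconcile with the ambient norm.

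There is also a genuine gap in your thickness verification. You propose to ``locate a level $N$ so large that all the $r_i$ are accounted for by vertices of generation $\le N$'' and to take the level-$N$ vertices as $q_1,\dots,q_m$, which would ``then contain $t_1,\dots,t_n$''. But the prescribed points $r_i=g(t_i)$ are \emph{arbitrary} points on $g$, and the set of vertex parameters at all finite levels is only countable, so in general no $t_i$ is a vertex parameter and your $\widetilde g=g_{\omega'}$ (obtained by a global swap at level $N{+}1$) need not satisfy $\widetilde g(t_i)=r_i$. The paper fixes this by enlarging the family in a second step: $T_X$ consists of all finite \emph{pastings} of branch-geodesics at vertex points. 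Given $g$ and $r_1,\dots,r_n$, one covers each $t_i$ by a short vertex-to-vertex subinterval on which $\widetilde g$ is declared to coincide with $g$ (so that $\widetilde g(t_i)=g(t_i)$ automatically), arranges the covering intervals to have total length at most $\tfrac12$, and performs the branch swap only on the complementary intervals. The deviation sum then picks up at least $\tfrac{\ep}{2}\cdot\tfrac12=\tfrac{\ep}{4}$. Without both the pasting closure and the covering argument, the thickness condition of Definition~\ref{D:ThickFam} is not met.
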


This result complements the following two results of
\cite{Ost13+}:

\begin{theorem}[\cite{Ost13+}]\label{T:ThickNRNP} If a Banach space $X$ admits a
bilipschitz embedding of a thick family of geodesics, then $X$
does not have the \RNP.
\end{theorem}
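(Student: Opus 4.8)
The plan is to use the classical martingale characterization of the \RNP: a Banach space $X$ fails the \RNP\ if and only if it carries a uniformly bounded $X$-valued martingale that does not converge in $L_1$ (see \cite{DU77}). Since a uniformly bounded martingale that converges a.e.\ also converges in $L_1$ by bounded convergence, it suffices to produce a uniformly bounded martingale $(M_k)$ on some probability space with $\inf_k\Ex\|M_{k+1}-M_k\|>0$. I would construct such a martingale directly from the embedded thick family. First I normalize: rescale $d_M$ so that $d_M(u,v)=1$, and let $f\colon M\to X$ be the given bilipschitz embedding with $C_1 d_M(x,y)\le\|f(x)-f(y)\|\le C_2 d_M(x,y)$. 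Then for each $uv$-geodesic $g$ the curve $f\circ g\colon[0,1]\to X$ is $C_2$-Lipschitz, bilipschitz onto its image, and takes values in $B(f(u),C_2)$.

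The construction is a branching family of geodesics indexed by a binary tree, together with its difference-quotient martingale on $\Omega=[0,1]\times\{\pm\}^{\N}$ (Lebesgue measure times fair coins). I start from one $g_0\in T$ and the trivial partition of $[0,1]$. At each step, to every current branch --- a geodesic $g_\sigma\in T$ with a finite partition of $[0,1]$ whose breakpoints are the common points accumulated so far --- I apply thickness, feeding in all previous breakpoints as the preserved points $r_1,\dots,r_n$. This yields $\tilde g_\sigma\in T$ agreeing with $g_\sigma$ at the refined common points $q_1<\dots<q_m$ and deviating at interlaced points $s_1,\dots,s_{m+1}$ with $\sum_i d_M(g_\sigma(s_i),\tilde g_\sigma(s_i))\ge\alpha$. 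I refine the partition by inserting the $q_i$ and $s_i$, and split the branch into two equally likely children, one following $g_\sigma$ and one following $\tilde g_\sigma$. Finally I set $M_k(\omega)$ to be the difference quotient $\frac{f(h(b))-f(h(a))}{b-a}$ of the chosen branch-geodesic $h$ over the level-$k$ cell $[a,b]$ containing $t$. Replaying the old breakpoints into the thickness hypothesis keeps the partitions nested, so the filtration $\mathcal{F}_k$ is increasing.

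Three things then need checking. \emph{Martingale property:} over a parent cell $[a,b]$ both children agree with the parent at the endpoints, since those are common points; hence for each child the width-weighted average of its sub-cell difference quotients telescopes to $\frac{f(b)-f(a)}{b-a}$, and averaging over the new coin recovers $M_k$, giving $\Ex[M_{k+1}\mid\mathcal{F}_k]=M_k$. \emph{Uniform boundedness:} the endpoints of any cell are common points at $M$-distance at most the cell width, so $\|M_k\|\le C_2$. \emph{Lower bound on increments:} fix a deviation point $s=s_i$ in a gap $[a,b]=[q_{i-1},q_i]$; on the sub-cell $[a,s]$ the two children produce $A=\frac{f(g_\sigma(s))-f(a)}{s-a}$ and $B=\frac{f(\tilde g_\sigma(s))-f(a)}{s-a}$, so $\tfrac12\|A-M_k\|+\tfrac12\|B-M_k\|\ge\tfrac12\|A-B\|=\frac{\|f(g_\sigma(s))-f(\tilde g_\sigma(s))\|}{2(s-a)}\ge\frac{C_1\,d_M(g_\sigma(s),\tilde g_\sigma(s))}{2(s-a)}$; integrating this over $t\in[a,s]$ contributes at least $\frac{C_1}{2}d_M(g_\sigma(s),\tilde g_\sigma(s))$. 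Summing over the deviation points in each branch and over branches yields $\Ex\|M_{k+1}-M_k\|\ge\frac{C_1\alpha}{2}$ at every level, and the theorem follows.

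The step I expect to be the crux is precisely the lower bound on the increments, and in particular the design choice that makes it possible. A naive difference-quotient martingale of a single limit curve measures only the deviation of that curve from its chords, which can genuinely cancel in spaces with poor geometry (a two-dimensional $\ell_\infty$-type example already shows a rerouting that preserves the distances to both cell endpoints yet leaves the point on the chord), so the thickness deviation would be invisible to it --- consistent with the fact that no thick family embeds into an \RNP\ space. The device that circumvents this is the symmetric two-branch splitting combined with the elementary inequality $\tfrac12\|A\|+\tfrac12\|B\|\ge\tfrac12\|A-B\|$, which turns the \emph{between-geodesic} distances --- controlled from below by $C_1$ and the thickness constant $\alpha$ --- into a cancellation-free bound on $\Ex\|M_{k+1}-M_k\|$. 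The remaining effort is bookkeeping: ensuring the partitions stay nested (by replaying old breakpoints as preserved points) and that the branch splittings assemble into a single uniformly bounded martingale.
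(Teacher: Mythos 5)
This theorem is stated in the paper without proof --- it is imported from \cite{Ost13+} --- and your argument reconstructs essentially the proof given in that reference: the martingale characterization of the \RNP\ combined with a branching difference-quotient martingale along geodesics produced by iterating thickness, where the coin-flip symmetrization via $\frac12\|A-M_k\|+\frac12\|B-M_k\|\ge\frac12\|A-B\|$ converts the between-geodesic deviations into the lower bound $\Ex\|M_{k+1}-M_k\|\ge C_1\alpha/2$, so that the bounded martingale is not Cauchy in $L_1$. The details check out (replaying old breakpoints as the $r_i$ keeps the partitions nested and makes the telescoping martingale identity work since those breakpoints reappear among the common points $q_i$; degenerate deviation points with $s_i=q_{i-1}$ or $s_i=q_i$ contribute zero to both sides of your estimate and cause no division-by-zero problem), and your closing observation about why a single-curve martingale fails is corroborated within this very paper: pastings of geodesic pieces at common points need not belong to the original family, which is exactly why Step 2 of the proof of Theorem \ref{T:CustThick} closes $T_0$ under such pastings in order to obtain a thick family $T_X$.
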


\begin{theorem}[\cite{Ost13+}]\label{T:UsingBR} For each thick family $T$ of geodesics there exists a Banach space $X$ which does not have the \RNP\ and does not
admit a bilipschitz embedding of $T$ into $X$.
\end{theorem}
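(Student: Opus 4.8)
The plan is to exhibit a \emph{single} non-\RNP\ space that refuses a bilipschitz copy of \emph{every} thick family; the ``for each $T$, there is $X$'' formulation then follows at once. The candidate is the Bourgain--Rosenthal space $X_{BR}$, a subspace of $L_1$ that fails the \RNP\ yet contains no bounded $\delta$-tree for any $\delta>0$ (here a $\delta$-tree is a bounded family $(x_\sigma)$ indexed by finite $0$--$1$ strings with $x_\sigma=\tfrac12(x_{\sigma 0}+x_{\sigma 1})$ and $\|x_{\sigma i}-x_\sigma\|\ge\delta$). The key background fact is that failure of the \RNP\ is witnessed only by a bounded $\delta$-\emph{bush} (arbitrary convex averaging of successors), a strictly weaker object, and the Bourgain--Rosenthal example is precisely what separates bushes from trees. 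Consequently the whole theorem reduces to one implication, which I would isolate as a lemma: \emph{if a thick family $T$ with thickness constant $\alpha$ bilipschitzly embeds into a Banach space $X$, then $X$ contains a bounded $\delta$-tree for some $\delta=\delta(\alpha,D)>0$}, where $D$ denotes the distortion. Granting this, $X_{BR}$ cannot receive $T$, while it is non-\RNP, so $X=X_{BR}$ proves the statement.

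To build the tree I would fix a bilipschitz $f\colon M\to X$ with $\frac1D d_M(x,y)\le \|f(x)-f(y)\|\le d_M(x,y)$ and exploit the self-similarity built into thickness: the thickness condition applies not only at the endpoints $u,v$ but, through the choice of marked points, to every sub-geodesic between two common points, and at every scale. Starting from one $g\in T$ and marking its midpoint, thickness furnishes a second geodesic agreeing with $g$ at the two ends and at the midpoint while deviating in between by total amount $\ge\alpha$. Transporting this deviation through the lower Lipschitz bound, the two ``halves'' of the image split away from the common midpoint by a definite amount. Declaring the images of the successive common points to be (approximate) tree nodes, and iterating the branching inside each newly created segment, produces a dyadic array indexed by $0$--$1$ strings whose successive separations are controlled from below in terms of $\alpha$ and $D$, and whose boundedness is inherited from the bounded diameter of the embedded family.

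The main obstacle is that this array is only an \emph{approximate} $\delta$-tree: the images of geodesic midpoints are approximate midpoints rather than exact averages, so the martingale identity $x_\sigma=\tfrac12(x_{\sigma 0}+x_{\sigma 1})$ holds only up to error, and the thickness bound $\sum_{i}d_M(g(s_i),\widetilde g(s_i))\ge\alpha$ controls the \emph{total} deviation over the $m+1$ gaps rather than the deviation at any single branch. I expect both issues to be the technical heart of the argument. The total-versus-per-branch discrepancy I would resolve by localization: marking points densely outside a chosen sub-interval forces the deviation to concentrate there (geodesics pinned at many points and $1$-Lipschitz cannot wander elsewhere), so a controlled fraction of $\alpha$ is delivered to a single designated branch at each step. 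The approximate-midpoint defect I would absorb either by replacing each node with the genuine average $\tfrac12(x_{\sigma 0}+x_{\sigma 1})$ of its chosen successors and tracking the accumulated error along branches, or by first recording that $X_{BR}$ admits no bounded \emph{approximate} $\delta$-tree (which follows from the same martingale-divergence obstruction), so that an approximate tree already yields the contradiction.

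Finally I would assemble the pieces: the localization and averaging steps upgrade the approximate array to a genuine bounded $\delta$-tree with $\delta$ depending only on $\alpha$ and $D$, contradicting the defining property of $X_{BR}$; hence no bilipschitz embedding of $T$ into $X_{BR}$ exists, and since $X_{BR}$ is non-\RNP\ the theorem follows.
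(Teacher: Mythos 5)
There is a genuine gap, and it is fatal to the overall strategy rather than a repairable technicality. Your plan is to exhibit a \emph{single} non-\RNP\ space (the Bourgain--Rosenthal space $X_{BR}$) refusing every thick family, via the key lemma that a bilipschitz embedding of a thick family into $X$ produces a bounded $\delta$-tree in $X$. But no universal space can exist, and the lemma is false, by the main result of this very paper: Theorem \ref{T:CustThick} produces for \emph{every} non-\RNP\ space $X$ --- in particular for $X=X_{BR}$ --- a thick family $T_X$ that embeds bilipschitzly into $X$. Applied to $T_{X_{BR}}\hookrightarrow X_{BR}$, your lemma would place a bounded $\delta$-tree inside $X_{BR}$, contradicting the defining property of that space; the approximate-tree variant fails for the same reason, since a bounded approximate $\delta$-tree with summable midpoint defects can be corrected to a genuine one by propagating the errors along branches. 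This is precisely why Theorem \ref{T:UsingBR} is stated ``for each $T$ there exists $X$'': the space must depend on the family (dually to the Remark after the Corollary, which notes that $M_X$, $T_X$ must depend on $X$), and the proof in \cite{Ost13+} accordingly constructs, for a \emph{given} thick family, a non-\RNP\ space by a Bourgain--Rosenthal-type construction adapted to that family, rather than testing all families against one fixed space.

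The concrete step at which your tree construction breaks is the localization claim. For consecutive common points $q_i\le s\le q_{i+1}$ of $g$ and $\widetilde g$, the triangle inequality together with $g(q_i)=\widetilde g(q_i)$ gives $d_M(g(s),\widetilde g(s))\le 2\min(s-q_i,\,q_{i+1}-s)\le q_{i+1}-q_i$, so marking points $\eta$-densely outside a designated subinterval $J$ caps each outside gap's deviation at roughly $\eta$, but the number of outside gaps is of order $1/\eta$, leaving room for total deviation up to $d_M(u,v)-|J|$ outside $J$ --- independent of $\eta$ and typically much larger than $\alpha$. Hence thickness can be satisfied with \emph{all} deviation avoiding $J$, and no designated branch is guaranteed any definite fraction of $\alpha$. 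This is exactly the bush/tree distinction: thickness only forces bush-like branching, in which the deviation may be split in arbitrarily small proportions over many gaps (compare the proof of Theorem \ref{T:CustThick}, where the generation-$n$ deviations are distributed according to the convex weights $\lambda_{n,j}$ of the $\ep$-bush), and bounded bushes, unlike bounded $\delta$-trees, exist in every non-\RNP\ space --- the Bourgain--Rosenthal space exists precisely to separate the two notions. A secondary unaddressed point: even where deviation is present, your nodes are image points, whose parent--child distances shrink with the scale of the subdivision, so a scale-independent separation $\delta$ would in any case require passing to normalized difference quotients rather than points; but this is moot given that the localization step cannot be made to work.
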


\begin{corollary}[of Theorems \ref{T:CustThick} and
\ref{T:ThickNRNP}] A Banach space $X$ does not have the \RNP\ if
and only if it admits a bilipschitz embedding of some thick family
of geodesics.
\end{corollary}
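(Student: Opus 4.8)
The plan is to read off the equivalence directly from the two cited results, handling each implication separately. Neither direction requires a fresh construction: the entire content has been front-loaded into Theorems~\ref{T:CustThick} and~\ref{T:ThickNRNP}, so the Corollary is purely an assembly step. I would therefore prove the ``only if'' and ``if'' directions as two short applications and then combine them.

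For the implication ``no \RNP\ $\Rightarrow$ $X$ embeds a thick family'', I would apply Theorem~\ref{T:CustThick}. Given a Banach space $X$ without the \RNP, that theorem supplies a metric space $M_X$ containing a thick family $T_X$ of geodesics, together with a bilipschitz embedding $f\colon M_X\to X$. Since the images of all geodesics of $T_X$ lie inside $M_X$, the map $f$ restricts to a bilipschitz embedding of (the part of $M_X$ carrying) $T_X$ into $X$. Hence $X$ admits a bilipschitz embedding of a thick family of geodesics, which is exactly the asserted conclusion in this direction.

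For the reverse implication ``$X$ embeds a thick family $\Rightarrow$ no \RNP'', I would invoke Theorem~\ref{T:ThickNRNP}, whose statement is verbatim this implication: if $X$ bilipschitzly contains a thick family of geodesics, then $X$ fails the \RNP. No contrapositive is needed --- the two theorems supply the two arrows directly, and composing them yields the biconditional.

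The only point deserving a sentence of care, rather than a genuine obstacle, is that ``$X$ admits a bilipschitz embedding of a thick family'' must be read consistently across both theorems: thickness (Definition~\ref{D:ThickFam}) is an intrinsic metric property of the family inside its ambient space, and a bilipschitz map distorts all the distances $d_M(g(s_i),\widetilde g(s_i))$ occurring in the thickness condition only by bounded factors. Thus the family produced by Theorem~\ref{T:CustThick} and the family hypothesized in Theorem~\ref{T:ThickNRNP} refer to the same notion, and the two implications compose without any gap. With this observation the Corollary is complete; the substantive work lives entirely in the proof of Theorem~\ref{T:CustThick}, the main theorem of the paper.
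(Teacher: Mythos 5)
Your proposal is correct and coincides with the paper's (implicit) argument: the corollary is immediate, with Theorem~\ref{T:CustThick} giving the forward implication and Theorem~\ref{T:ThickNRNP} giving the reverse, exactly as you assemble them.
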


\begin{remark} Theorem \ref{T:UsingBR} shows that the metric space $M_X$ and
the family $T_X$ in Theorem \ref{T:CustThick} should depend on the
space $X$.
\end{remark}

We use some standard definitions of the Banach space theory and
the theory of metric embeddings, see \cite{Ost13}.

\section{Proof of the main result}

\begin{proof}[Proof of Theorem \ref{T:CustThick}] We use the characterization of the \RNP\ in terms of bushes (see \cite[bottom of page~111]{BL00} or
\cite[Theorem 2.3.6]{Bou83}).

\begin{definition}\label{D:Bush} Let $Z$ be a Banach space and let $\ep>0$. A
set of vectors $\{z_{n,j}\}_{n=0,j=1}^{~\infty~~m_n}$ in $Z$ is
called an $\ep$-{\it bush} if for every $n\ge 1$ there is a
partition $\{A^n_k\}_{k=1}^{m_{n-1}}$ of $\{1,\dots,m_n\}$ such
that
\begin{equation}
||z_{n,j}-z_{n-1,k}||\ge \ep
\end{equation}
for every $j\in A^n_k$, and \begin{equation}\label{E:Bush2}
z_{n-1,k}=\sum_{j\in A^n_k}\lambda_{n,j}z_{n,j}\end{equation} for
some $\lambda_{n,j}\ge 0$, $\sum_{j\in A^n_k}\lambda_{n,j}=1$.
\end{definition}

The mentioned characterization of the \RNP\ is:

\begin{theorem}[\cite{BL00,Bou83}]\label{T:RNPBush} A Banach space $Z$ does not have the \RNP\ if and
only if it contains a bounded $\ep$-bush for some $\ep>0$.
\end{theorem}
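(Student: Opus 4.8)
This is the classical dentability characterization of the \RNP, so the plan is to reconstruct the standard proof. The starting point is the equivalent formulation I will take for granted: $Z$ has the \RNP\ if and only if every bounded subset of $Z$ is dentable, where a bounded set $D$ is \emph{non-dentable} with parameter $\sigma>0$ when $x\in\overline{\conv}(D\setminus B(x,\sigma))$ for every $x\in D$. I would prove the two implications of the theorem separately, reducing each to the behaviour of slices and $\overline{\conv}$.

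For the easy direction, suppose $Z$ contains a bounded $\ep$-bush $\{z_{n,j}\}$ and put $K=\overline{\conv}\{z_{n,j}\}$. I would show that $K$ admits no slice of diameter smaller than $\ep$, so that $K$ is non-dentable and $Z$ fails the \RNP. Fix $f\in Z^{*}$, let $M=\sup_{K}f=\sup_{n,j}f(z_{n,j})$, and consider a slice $S_{\eta}=\{x\in K:f(x)>M-\eta\}$ with $0<\eta<1$. Choosing a vertex $p=z_{n-1,k}$ with $f(p)>M-\eta^{2}$ and writing $p=\sum_{j\in A^{n}_{k}}\lambda_{n,j}z_{n,j}$, the bound $f(z_{n,j})\le M$ together with a Markov-type estimate forces the total $\lambda$-weight of the children with $f(z_{n,j})\le M-\eta$ to be below $\eta<1$; hence some child $z_{n,j_{0}}$ lies in $S_{\eta}$, while the separation inequality $\|z_{n,j_{0}}-p\|\ge\ep$ and $p\in S_{\eta}$ give $\diam S_{\eta}\ge\ep$. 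As every slice of $K$ has this form, $K$ is not dentable.

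For the converse, if $Z$ fails the \RNP\ I would fix a bounded set $D$ that is non-dentable with some parameter $\sigma>0$. Starting from any $z_{0,1}\in D$ and applying $x\in\overline{\conv}(D\setminus B(x,\sigma))$ at each already-built vertex, I would construct level by level an \emph{approximate} bush: vertices in $D$, separation $\|z_{n,j}-z_{n-1,k}\|\ge\sigma$, and barycenters matching the parent up to an error $\delta_{n}$ chosen so that $\sum_{n}\delta_{n}<\sigma/2$. The difficulty, and what I expect to be the crux of the argument, is that the closure in the definition of non-dentability yields only approximate convex representations, whereas equation~\eqref{E:Bush2} demands exact ones.

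I would resolve this by a martingale correction. Model the approximate bush on the natural tree probability space, with $\mathcal{F}_{n}$ generated by the level-$n$ atoms and $X_{n}$ the level-$n$ value, so that the construction gives $\|\Ex[X_{n}\mid\mathcal{F}_{n-1}]-X_{n-1}\|\le\delta_{n}$. Setting $\eta_{m}=\Ex[X_{m}\mid\mathcal{F}_{m-1}]-X_{m-1}$, I replace $X_{n}$ by the exact martingale $Y_{n}=X_{0}+\sum_{m\le n}(X_{m}-\Ex[X_{m}\mid\mathcal{F}_{m-1}])$, which satisfies $Y_{n}=X_{n}-\sum_{m\le n}\eta_{m}$ with $\|\sum_{m\le n}\eta_{m}\|<\sigma/2$. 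Then $\{Y_{n}\}$ is bounded and finitely valued at each level, its conditional-expectation identities are precisely \eqref{E:Bush2}, and its increments obey $\|Y_{n}-Y_{n-1}\|\ge\sigma-\delta_{n}\ge\sigma/2$, so the values of $\{Y_{n}\}$ form a bounded $(\sigma/2)$-bush. Once this exactification is in place, the remaining verifications are the routine dentability and slice bookkeeping indicated above.
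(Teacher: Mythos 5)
The paper does not prove this statement: Theorem \ref{T:RNPBush} is imported from the literature (\cite{BL00}, bottom of page 111, and \cite[Theorem 2.3.6]{Bou83}), so there is no internal proof to compare yours against; judged on its own, your reconstruction is correct and is essentially the classical argument found in those references. In the direction bush $\Rightarrow$ non-\RNP, your slice estimate checks out: with $M=\sup_K f$ and $0<\eta<1$, choosing a vertex $p=z_{n-1,k}$ with $f(p)>M-\eta^2$, the representation \eqref{E:Bush2} forces the total $\lambda$-weight of children with $f(z_{n,j})\le M-\eta$ to be less than $\eta$, so some child lies in the slice together with $p$, and the $\ep$-separation gives diameter at least $\ep$; slices with $\eta\ge1$ contain slices with $\eta<1$, so the closed convex hull of the bush is non-dentable. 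In the converse direction, the martingale exactification is the right device and you execute it correctly: $Y_n=X_n-\sum_{m\le n}\eta_m$ is an exact, finitely valued martingale, each $\eta_m$ is $\mathcal{F}_{m-1}$-measurable with $\|\eta_m\|\le\delta_m$, the increments satisfy $\|Y_n-Y_{n-1}\|\ge\sigma-\delta_n\ge\sigma/2$ once $\sum_m\delta_m<\sigma/2$, and reading the identity $\Ex[Y_n\mid\mathcal{F}_{n-1}]=Y_{n-1}$ atom by atom yields exactly \eqref{E:Bush2} with the conditional probabilities playing the role of the $\lambda_{n,j}$, so the values of the $Y_n$ form a bounded $(\sigma/2)$-bush. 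Your one external input --- that \RNP\ is equivalent to dentability of every bounded set --- is precisely how the cited sources organize the proof, so relying on it is appropriate rather than a gap. Two small points you use implicitly and could state: each $A^n_k$ is nonempty (so the vertex $p$ you pick indeed has children, which \eqref{E:Bush2} guarantees), and $\sup_K f=\sup_{n,j}f(z_{n,j})$ because $f$ is linear and continuous and $K$ is the closed convex hull of the bush points.
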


In this theorem and below we may and shall assume that $m_0=1$.
\medskip

It is easy to see that the direct sum of two Banach spaces with
the \RNP\ has the \RNP. Because of this a subspace of codimension
$1$ in a non-\RNP\ Banach space also does not have the \RNP. Let
$x^*\in X^*$, $||x^*||=1$ be a functional which attains its norm
on $x\in X$, $||x||=1$. By Theorem \ref{T:RNPBush}, we can find a
bounded $\ep$-bush in $\ker x^*$. Shifting this bush by $x$ we get
a bush $\{x_{n,j}\}_{n=0,j=1}^{~\infty~~m_n}$ satisfying the
condition $x^*(x_{n,j})=1$ for all $n$ and $j$. Consider the
closure of the convex hull of the set $B_X\cup\{\pm
x_{n,j}\}_{n=0,j=1}^{~\infty~~m_n}$, where $B_X$ is the closed
unit ball of $X$. It is clear that this set is the unit ball of
$X$ in an equivalent norm and that in this new norm
\begin{equation}\label{E:Norm1}||x_{n,j}||=1\hbox{ for all }
n\hbox{ and }j.\end{equation} Since the property of $X$ which we
are going to establish is clearly an isomorphic invariant, it
suffices to consider the case where \eqref{E:Norm1} is satisfied.
\medskip

We are going to use this $\ep$-bush to construct a thick family of
geodesics in $X$ joining $0$ and $x_{0,1}$. First we construct a
subset of the desired set of geodesics, this subset will be
constructed as the set of limits of certain broken lines in $X$
joining $0$ and $x_{0,1}$. The constructed broken lines are also
geodesics (but they do not necessarily belong to the family
$T_X$).
\medskip

The mentioned above broken lines will be constructed using
representations of the form $x_{0,1}=\sum_{i=1}^mz_i$, where $z_i$
are such that $||x_{0,1}||=\sum_{i=1}^m||z_i||$. The broken line
represented by such finite sequence $z_1,\dots,z_m$ is obtained by
letting $z_0=0$ and joining $\sum_{i=0}^kz_i$  with
$\sum_{i=0}^{k+1}z_i$ with a line segment for $k=0,1,\dots,m-1$.
Vectors $\sum_{i=0}^k z_i$, $k=0,1,\dots,m$ will be called {\it
vertices} of the broken line.
\medskip

The infinite set of broken lines which we construct is labelled by
vertices of the infinite binary tree $B$ in which each vertex is
represented by a finite (possibly empty) sequence of $0$ and $1$,
two vertices in $B$ are adjacent if the sequence corresponding to
one of them is obtained from the sequence corresponding to the
other by adding one term on the right. (For example, vertices
corresponding to $(1,1,1,0)$ and $(1,1,1,0,1)$ are
adjacent.)\medskip

The broken line corresponding to the empty sequence $\emptyset$ is
represented by the one-element sequence $x_{0,1}$, so it is just a
line segment joining $0$ and $x_{0,1}$.
\medskip

We have
\[x_{0,1}=\lambda_{1,1}x_{1,1}+\dots+\lambda_{1,m_1}x_{1,m_1},\]
where $||x_{1,j}-x_{0,1}||\ge \ep$ (recall that we assumed
$m_0=1$). We introduce the vectors
\[y_{1,j}=\frac12(x_{1,j}+x_{0,1}).\]

For these vectors we have
\[x_{0,1}=\lambda_{1,1}y_{1,1}+\dots+\lambda_{1,m_1}y_{1,m_1},\]
$||y_{1,j}-x_{1,j}||=||y_{1,j}-x_{0,1}||\ge\frac{\ep}2$, and
$||y_{1,j}||=1$.\medskip

As a preliminary step to the construction of the broken lines
corresponding to one-element sequences $(0)$ and $(1)$ we form a
broken line represented by the points
\begin{equation}\label{E:1stSeq}\lambda_{1,1}y_{1,1},\dots,\lambda_{1,m_1}y_{1,m_1}.\end{equation}
We label the broken line represented by \eqref{E:1stSeq} by
$\overline{\emptyset}$.\medskip

The broken line corresponding to the one-element sequence $(0)$ is
represented by the sequence obtained from \eqref{E:1stSeq} if we
replace each term $\lambda_{1,j}y_{1,j}$ by a two-element sequence
\begin{equation}\label{E:(0)}\frac{\lambda_{1,j}}2x_{0,1},\frac{\lambda_{1,j}}2x_{1,j}.\end{equation}

The broken line corresponding to the one-element sequence $(1)$ is
represented by the sequence obtained from \eqref{E:1stSeq} if we
replace each term $\lambda_{1,j}y_{1,j}$ by a two-element sequence
\begin{equation}\label{E:(1)}\frac{\lambda_{1,j}}2x_{1,j}, \frac{\lambda_{1,j}}2x_{0,1}.\end{equation}

It is easy to see that if we let $g$ and $\widetilde g$ be the
broken lines corresponding to $(0)$ and $(1)$, respectively,
parameterized by $[0,1]$ using either the distance to $0$ or
values of $x^*$; and pick $s_1,\dots,s_{m+1}$ in such a way that
they correspond to ends of line segments determined by
$\frac{\lambda_{1,j}\ep}2x_{0,1}$ for $(0)$ and
$\frac{\lambda_{1,j}\ep}2x_{1,j}$ for $(1)$ (see \eqref{E:(0)} and
\eqref{E:(1)}), we get
\begin{equation}\label{E:SumDeviat}\sum_{i=1}^{m+1}||g(s_i)-\widetilde
g(s_i)||\ge\frac{\ep}2.\end{equation}

Broken lines corresponding to $2$-element sequences are also
formed in two steps. To get the broken lines labelled by $(0,0)$
and $(0,1)$ we apply the described procedure to the geodesic
labelled $(0)$, to get the broken lines labelled by $(1,0)$ and
$(1,1)$ we apply the described procedure to the geodesic labelled
$(1)$.\medskip

In the preliminary step we replace each term of the form
$\frac{\lambda_{1,k}}2x_{0,1}$ by a multiplied by
$\frac{\lambda_{1,k}}2$ sequence \eqref{E:1stSeq}. We replace a
term of the form  $\frac{\lambda_{1,k}}2x_{1,k}$ by the multiplied
by $\frac{\lambda_{1,k}}2$ sequence
\begin{equation}
\{\lambda_{2,j}y_{2,j}\}_{j\in A^2_k},
\end{equation}
ordered arbitrarily, where $y_{2,j}=\frac{x_{1,k}+x_{2,j}}2$ and
$\lambda_{2,j}$, $x_{2,j}$, and $A^2_k$ are as in the definition
of the $\ep$-bush (observe that \eqref{E:Norm1} implies that
$||y_{2,j}||=1$). We label the obtained broken lines by
$\overline{(0)}$ and $\overline{(1)}$, respectively.
\medskip

To get the sequence representing the broken line labelled by
$(0,0)$ we do the following operation with the preliminary
sequence labelled $\overline{(0)}$.

\begin{itemize}

\item Replace each multiple $\lambda y_{1,j}$ present in the
sequence by the two-element sequence
\begin{equation}\label{E:Casey1}\lambda\frac{x_{0,1}}2,\lambda\frac{x_{1,j}}2.\end{equation}

\item Replace each multiple $\lambda y_{2,j}$, with $j\in A^2_k$,
present in the sequence by the two-element sequence
\begin{equation}\label{E:Casey2}\lambda\frac{x_{1,k}}2,\lambda\frac{x_{2,j}}2.\end{equation}
\end{itemize}

To get the sequence representing the broken line labelled by
$(0,1)$ we do the same but changing the order of terms in
\eqref{E:Casey1} and \eqref{E:Casey2}. To get the sequences
representing the broken lines labelled by $(1,0)$ and $(1,1)$, we
apply the same procedure to the broken line labelled
$\overline{(1)}$.
\medskip

We proceed in an obvious way. Suppose that we have already
constructed all broken lines labelled by sequences of length at
most $p$, and all of these broken lines are represented by
sequences consisting of multiples of $x_{n,j}$ with $n\le p$. To
get broken lines labelled by $(a_1,\dots,a_p,0)$ and
$(a_1,\dots,a_p,1)$, we form an intermediate broken line labelled
by $\displaystyle{\overline{(a_1,\dots,a_p)}}$. It is formed as
follows: Each of the vectors $x_{\ell,k}$, where $\ell\le p$ is
replaced by the sequence
\[\{\lambda_{\ell+1,j}y_{\ell+1,j}\}_{j\in A^{\ell+1}_k},\]
where $y_{\ell+1,j}=\frac12(x_{\ell,k}+x_{\ell+1,j})$. The
sequence is multiplied by the same scalar by which $x_{\ell,k}$ is
multiplied in the sequence labelled by $(a_1,\dots,a_p)$.
\medskip

To form the sequence labelled by $(a_1,\dots,a_p,0)$ we replace
each multiple of $y_{\ell+1,j}$ by the corresponding multiple of
the two-element sequence $\frac12x_{\ell,k}$,
$\frac12x_{\ell+1,j}$. To form the sequence labelled by
$(a_1,\dots,a_p,1)$ we replace each multiple of $y_{\ell+1,j}$ by
the corresponding multiple of the two-element sequence
$\frac12x_{\ell+1,j}$, $\frac12x_{\ell,k}$.

\begin{observation}\label{O:Deviat} A broken line labelled by a
sequence $(a_1,\dots,a_p,a_{p+1})$ passes thro\-ugh the vertices
of the broken line labelled by $\overline{(a_1,\dots,a_p)}$. There
is a new vertex $u$ of the broken line labelled by
$(a_1,\dots,a_p,0)$ between each pair of consecutive vertices of
the broken line labelled by $\overline{(a_1,\dots,a_p)}$. There is
a new vertex $v$ of the broken line labelled by
$(a_1,\dots,a_p,1)$ between each pair of consecutive vertices of
the broken line labelled by $\overline{(a_1,\dots,a_p)}$. The
inequality \eqref{E:SumDeviat} generalizes in the following way.
If we add the distances $||u-v||$, where $u$ and $v$ are as above,
over all pairs of consecutive vertices of the broken labelled by
$\overline{(a_1,\dots,a_p)}$, we get at least $\frac\ep2$.
Furthermore, if we add the distances $||u-v||$, where $u$ and $v$
are as above, over some set of pairs of consecutive vertices of
the broken labelled by $\overline{(a_1,\dots,a_p)}$, we get at
least $\frac\ep2\cdot($sum of distances between the consecutive
pairs in the selection$)$.
\end{observation}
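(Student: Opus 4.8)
The plan is to reduce every assertion to a single local computation on one segment of the intermediate broken line $\overline{(a_1,\dots,a_p)}$ and then to sum. First I would fix a pair of consecutive vertices of $\overline{(a_1,\dots,a_p)}$ and let $V$ denote the first of the two. By the replacement rule defining the intermediate line, the corresponding segment is a scalar multiple $\mu\,y_{\ell+1,j}$ with $\mu\ge0$, where $y_{\ell+1,j}=\frac12(x_{\ell,k}+x_{\ell+1,j})$ and $j\in A^{\ell+1}_k$. The identity $\mu\,y_{\ell+1,j}=\frac\mu2 x_{\ell,k}+\frac\mu2 x_{\ell+1,j}$ shows that in both $(a_1,\dots,a_p,0)$ and $(a_1,\dots,a_p,1)$ this block is replaced by a two-element sequence whose terms sum to $\mu\,y_{\ell+1,j}$; hence the partial sums at the two ends of the block are unchanged, so every vertex of $\overline{(a_1,\dots,a_p)}$ remains a vertex of each descendant broken line. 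This is the first assertion.

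Next I would locate the new interior vertex inside this block. For $(a_1,\dots,a_p,0)$ the two-element sequence is $\frac\mu2 x_{\ell,k}$ followed by $\frac\mu2 x_{\ell+1,j}$, so the new vertex is $u=V+\frac\mu2 x_{\ell,k}$; for $(a_1,\dots,a_p,1)$ the order is reversed, giving $v=V+\frac\mu2 x_{\ell+1,j}$. Thus $u-v=\frac\mu2(x_{\ell,k}-x_{\ell+1,j})$, and since $j\in A^{\ell+1}_k$ the bush inequality yields $||x_{\ell+1,j}-x_{\ell,k}||\ge\ep$, whence $||u-v||\ge\frac{\mu\ep}2$. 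On the other hand the length of the segment joining the two consecutive vertices is $||\mu\,y_{\ell+1,j}||=\mu$, because \eqref{E:Norm1} gives $||y_{\ell+1,j}||=1$. Combining the two estimates yields the per-segment bound
\[||u-v||\ge\frac\ep2\cdot\mu=\frac\ep2\cdot(\hbox{length of the segment joining that pair of consecutive vertices}).\]

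Summing this bound over any prescribed family of consecutive pairs is exactly the last (``furthermore'') assertion. To obtain the fourth assertion I would sum over all consecutive pairs and use that $\sum\mu$, taken over all segments of $\overline{(a_1,\dots,a_p)}$, equals the total length of that broken line, which is $1$. Indeed, the replacement rule turns each norm-one term $x_{\ell,k}$ of $(a_1,\dots,a_p)$, scaled by some $c\ge0$, into the family $\{c\lambda_{\ell+1,j}y_{\ell+1,j}\}_{j\in A^{\ell+1}_k}$; since $\sum_{j\in A^{\ell+1}_k}\lambda_{\ell+1,j}=1$ and $||y_{\ell+1,j}||=1$, this family has the same total length $c$ as the single term it replaces. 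Hence $\overline{(a_1,\dots,a_p)}$ has the same total length as the geodesic $(a_1,\dots,a_p)$, namely $||x_{0,1}||=1$, so $\sum\mu=1$ and $\sum||u-v||\ge\frac\ep2$. This recovers and generalizes \eqref{E:SumDeviat}; taking $p=0$ returns the already verified base case.

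The computations above are elementary, so the only genuine obstacle is organizational. One must read the replacement rules carefully enough to be certain that every segment of $\overline{(a_1,\dots,a_p)}$ really does have the advertised form $\mu\,y_{\ell+1,j}$ with $j\in A^{\ell+1}_k$ --- which is precisely what simultaneously forces $||y_{\ell+1,j}||=1$ and legitimizes the bush inequality --- and that the two new vertices $u$ and $v$ arise in the same block and are paired correctly. Once the shape of a single generic block is pinned down, all four assertions follow from the local estimate above together with a one-line summation.
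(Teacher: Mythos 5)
Your proposal is correct and follows exactly the argument the paper intends: the paper states the Observation without proof, but its explicit verification of the base case \eqref{E:SumDeviat} is precisely your local computation ($u-v=\frac{\mu}{2}(x_{\ell,k}-x_{\ell+1,j})$, the bush inequality, $\|y_{\ell+1,j}\|=1$, and summation over blocks) specialized to $p=0$. Your write-up simply supplies the generic-block details and the length-preservation argument that the paper leaves implicit.
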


The thick family $T_X$ of geodesics whose existence is claimed in
Theorem \ref{T:CustThick} is constructed in two steps.
\medskip

\noindent{\bf Step 1.} Consider all infinite sequences consisting
of $0$ and $1$. For each such sequence we consider a {\it branch}
in the infinite tree $B$ consisting of vertices corresponding to
the finite initial segments of the sequence. The geodesic
corresponding to the branch is the limit (in the sense described
below) of the sequence of broken lines corresponding to vertices
of the branch. To define the limit we observe that all of the
geodesics of the sequence are distance preserving, and thus
$1$-Lipschitz, maps and that for any finite initial segment $I$
all {\it further} geodesics (that is, corresponding to segments
containing $I$ at their beginning) pass through all vertices of
the geodesic corresponding to $I$. In addition, the distance
between the two consecutive vertices of the geodesic corresponding
to a segment $I$ (that is a finite sequences of $0$ and $1$)
containing $t$ terms does not exceed $(\lambda_{\max})^t$ where
$\lambda_{\max}=\max\{\lambda_{n,j}: n, j\}$. On the other hand,
using Definition \ref{D:Bush}, equality \eqref{E:Norm1}, and some
easy inequalities we get that $1-\lambda_{\max}\ge \frac{\ep}{2}$
and thus $\lambda_{\max}\le1-\frac{\ep}{2}$. We conclude that the
sequence of $1$-Lipschitz maps which we consider converges on a
dense subset of $[0,1]$. Hence it converges everywhere on $[0,1]$
to a limit (this is the limit which we meant at the beginning of
this paragraph). Let $T_0$ be the family of all such limits
corresponding to all of the branches of $B$.
\medskip

\noindent{\bf Step 2.} For each finite collection of vertices of
the broken lines corresponding to finite sequences
$(a_1,\dots,a_p)$ consider all geodesics obtained by pasting
together pieces corresponding to geodesics of $T_0$ which join the
corresponding vertices (in the right order, so that the result of
this pasting is again a geodesic joining $0$ and $x_{0,1}$).
Denote the resulting family of geodesics by $T_X$.\medskip

It remains to show that $T_X$ is a thick family of geodesics. Let
$g$ be a geodesic in $T_X$ and let $r_1,\dots,r_n$ be a set of
points on it, let $r_i=g(t_i)$. Let
$[0,1]=\bigcup_{d=1}^w[h_{d-1},h_d]$ be a partition of $[0,1]$ for
which $0=h_0<h_1,\dots<h_w=1$. and the on each of the intervals
$[h_{d-1},h_d]$ the geodesic $g$ coincides with one of the
geodesics in $T_0$.
\medskip

For each of the intervals $[h_{d-1},h_d]$ and the corresponding
geodesic $\widehat g$ in $T_0$ we do the following. Let
$(b_i)_{i=1}^\infty$ be the sequence of $0$ and $1$ corresponding
to the geodesic $\widehat g$. We pick sufficiently large initial
segment $(b_i)_{i=1}^L$ of this sequence such that

\begin{enumerate}

\item $\widehat g(h_{d-1})$ and $\widehat g(h_d)$ are among
vertices of the broken line $g^L$ corresponding to
$(b_i)_{i=1}^L$.

\item All of the $t_i$ which are in $[h_{d-1}, h_d]$ can be
covered by subintervals $[\eta_{i-1},\eta_i]$ of $[h_{d-1},h_d]$
with total length $\le \frac12|h_d-h_{d-1}|$ and $\widehat
g(\eta_{i-1})$, $\widehat g(\eta_i)$ being vertices of the broken
line $g^L$.

\end{enumerate}

The desired geodesic $\widetilde g$ will be picked as follows: on
all intervals $[\eta_{i-1},\eta_i]$ it will coincide with $g$. On
all of the complementary intervals inside $[h_{d-1},h_d]$ we let
$\widetilde g$ to coincide with the geodesic corresponding to the
branch of $B$ which starts with $(b_i)_{i=1}^L$, but for which the
next term (in the infinite sequence) is different from $b_{L+1}$.
Together with Observation \ref{O:Deviat} (see the last sentence of
it), this implies that that the sum of the corresponding
deviations is at least $\frac\ep2\cdot\frac{h_d-h_{d-1}}2$. Doing
the same for all intervals $[h_{d-1},h_d]$, we get that the total
deviation is at least $\frac\ep4$. This completes the proof.
\end{proof}

\begin{remark} Our proof of Theorem \ref{T:CustThick} shows  any
Banach space without the \RNP\ has an equivalent norm in which it
has a thick family of geodesics.
\end{remark}

\section{Acknowledgements}

I would like to thank Beata Randrianantoanina for asking the
question answered by Theorem \ref{T:CustThick}. The research of
the author was supported by NSF DMS-1201269.

\end{large}

\begin{tiny}

\renewcommand{\refname}{\section{References}}

\end{tiny}

\end{document}